\theoremstyle{plain}
\newtheorem{thm}{{\bf Theorem}}[section]
\newtheorem{cor}[thm]{{\bf  Corollary}}
\newtheorem{prop}[thm]{{\bf Proposition}}
\newtheorem{claim}[thm]{{\bf Claim}}
\theoremstyle{definition}
\newtheorem{question}[thm]{{\bf Question}}
\newtheorem{remark}[thm]{{\bf Remark}}
\newcommand{\cf}{\mathord{\mathrm{cf}}}
\newcommand{\dom}{\mathord{\mathrm{dom}}}
\newcommand{\size}[1]{\left\vert {#1} \right\vert}
\newcommand{\seq}[1]{\langle {#1} \rangle}
\newcommand{\ka}{\kappa}
\newcommand{\om}{\omega}
\newcommand{\bbr}[1]{\llbracket {#1} \rrbracket}
\newcommand{\bbR}{\mathbb{R}}
\newcommand{\calB}{\mathcal{B}}
\newcommand{\calU}{\mathcal{U}}
\newcommand{\calV}{\mathcal{V}}
\title[]{Products of Lindel\"of spaces with points $G_\delta$}
\author[T. Usuba]{Toshimichi Usuba}
\address[T. Usuba]
{Faculty of Science and Engineering,
Waseda University, 
Okubo 3-4-1, Shinjyuku, Tokyo, 169-8555 Japan}
\email{usuba@waseda.jp}
\keywords{Aronszajn tree, Kurepa tree, Lindel\"of space, points $G_\delta$, square principle}
\subjclass[2010]{03E35, 54A25, 54D20}
\begin{document}
\maketitle
\begin{abstract}
We show that if CH holds and either (i) there exists an $\om_1$-Kurepa tree, or
(ii) $\square(\om_2)$ holds, then
there are regular $T_1$ Lindel\"of spaces $X_0$ and $X_1$ with points $G_\delta$
such that the extent of $X_0 \times X_1$ is strictly greater than $2^\om$.
\end{abstract}

\section{Introduction}

While every product of compact spaces is compact,
the product of two Lindel\"of spaces need not to be Lindel\"of;
The Sorgenfrey line is a typical example.
The square of two Sorgengrey lines has the Lindel\"of degree $2^\om$,
where the \emph{Lindel\"of degree} of the space $X$, $L(X)$, is 
the minimal cardinal $\ka$ such that every open cover of $X$ has a subcover of size $\le \ka$.
This fact lead us to the following natural question.
\begin{question}
Are there two Lindel\"of spaces whose product has the Lindel\"of degree $>2^\om$?
\end{question}
Some consistent examples are known.
Shelah \cite{Shelah}
constructed a model of ZFC in which 
there are two regular $T_1$ Lindel\"of spaces  with points $G_\delta$
whose product has the extent $(2^\om)^+=\om_2$,
where the \emph{extent} of $X$, $e(X)$, is $\sup\{\size{C} \mid C \subseteq X$ is closed discrete$\}$.
It is clear that $L(X) \ge e(X)$.
Gorelic \cite{Gorelic} refined and simplified Shelah's method
and got a model in which
there are two regular $T_1$ Lindel\"of spaces with points $G_\delta$
whose product has the extent $2^{\om_1}$ and $2^{\om_1}$ is arbitrary large.
The extent of the product of their spaces is bounded by $2^{\om_1}$,
and Usuba \cite{Usuba} proved that it is consistent that 
the extent of the product of two regular $T_1$ Lindel\"of spaces can be arbitrary large up to the
least measurable cardinal.
However it is still open if the existence of such Lindel\"of spaces is provable from ZFC.

In this paper,  
we  give new construction of such Lindel\"of spaces under some combinatorial principles.
\begin{thm}\label{thm1}
Suppose CH.
If there exists an $\om_1$-Kurepa tree, or Todor\v cevi\'c's square principle $\square(\om_2)$ holds,
then
there are regular $T_1$ Lindel\"of spaces $X_0, X_1$ with points $G_\delta$
such that $e(X_0 \times X_1)>2^\om$.
\end{thm}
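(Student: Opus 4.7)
The plan is to build each $X_i$ as a topological refinement of a base set of the form $\om_1 \cup W$, where $|W| = \om_2$: points of $\om_1$ are declared isolated, and each ``limit point'' $w \in W$ is assigned a countable set $A_w \subseteq \om_1$ with a countable local base consisting of sets $\{w\} \cup (A_w \setminus F)$ for suitable finite $F$. Countability of the $A_w$'s gives points $G_\delta$, and a standard choice of base makes the space regular and $T_1$. Using $\CH$, we have enough countable subsets of $\om_1$ to parametrise all $A_w$'s from the combinatorial data: in case (i), from countable cofinal pieces of the $\om_2$ branches of the Kurepa tree $T$, identified with its $\om_1$ levels; in case (ii), from countable cofinal subsets of the clubs $C_\alpha$ in a fixed $\square(\om_2)$-sequence, after a reduction that flattens the ground level to size $\om_1$.

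Lindel\"ofness is the heart of the argument. Given an open cover $\calU$, choose $U_w \in \calU$ with $w \in U_w$ for each $w \in W$, and let $F_w$ be the finite piece of $A_w$ excluded by the basic neighbourhood witnessing $w \in U_w$. One must show that despite $|W| = \om_2$, the family $\{F_w : w \in W\}$ is essentially determined by countably many values, so that countably many $U_w$'s cover all but a countable remainder of the ground $\om_1$, which is then covered from $\calU$ by countably many isolated-point neighbourhoods. In case (i), $\om_2$-many branches pass through only $\om_1$-many nodes of $T$, and a pressing-down argument along $T$ forces the required stabilisation; in case (ii), the coherence of the $C_\alpha$'s together with the non-threading property of $\square(\om_2)$ plays the analogous role, yielding a reflection of the cover that a naive cardinality argument would not give.

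To exhibit a closed discrete subset of size $\om_2$ in $X_0 \times X_1$, the two topologies are set up dually: the assignments $\alpha \mapsto A_{w_\alpha^0}$ and $\alpha \mapsto A_{w_\alpha^1}$ are arranged so that for $\alpha \neq \beta$ the product of basic neighbourhoods of $(w_\alpha^0, w_\alpha^1)$ excludes $(w_\beta^0, w_\beta^1)$, even though a single factor may fail to separate $w_\alpha^i$ from $w_\beta^i$. The diagonal $D = \{(w_\alpha^0, w_\alpha^1) : \alpha < \om_2\}$ is then closed discrete in $X_0 \times X_1$, with points off $D$ handled by the isolation of ground points. The principal obstacle is engineering the two topologies so that they simultaneously keep each factor Lindel\"of and destructively interfere in the product; uniformly handling cases (i) and (ii) will most likely proceed by isolating a common abstract structure---essentially, a sufficiently rigid coherent $\om_2$-sequence of countable subsets of $\om_1$---that both an $\om_1$-Kurepa tree (under $\CH$) and a $\square(\om_2)$-sequence furnish.
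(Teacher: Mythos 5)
There is a fatal structural flaw in the proposed spaces, before any use of the Kurepa tree or of $\square(\om_2)$. In your $X_i$, every basic neighbourhood of a point $w\in W$ has the form $\{w\}\cup(A_w\setminus F)$ with $A_w\subseteq\om_1$, so it meets $W$ only in $\{w\}$; and every ground point of $\om_1$ is isolated, hence has a neighbourhood disjoint from $W$. Therefore $W$ is a closed discrete subspace of each factor of cardinality $\om_2$, so $L(X_i)\ge e(X_i)=\om_2$ and $X_i$ is not Lindel\"of. Concretely, the open cover $\{\{\alpha\}\mid\alpha<\om_1\}\cup\{\{w\}\cup A_w\mid w\in W\}$ admits no subcover of size $<\om_2$, since each $w\in W$ lies in exactly one of its members. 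Your Lindel\"ofness sketch covers the ground set $\om_1$ together with only countably many points of $W$; the remaining $\om_2$-many points of $W$ are simply not covered, and no pressing-down or coherence argument can repair this, because the obstruction is already built into the local bases. (Note also that if each factor were Lindel\"of with this structure, the closed discrete set $W$ would already witness large extent in a single factor, which is impossible.)

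The idea you are missing is that the $\om_2$-many ``new'' points must accumulate to one another \emph{within each factor} and be pulled apart only in the product. The paper achieves this by taking the underlying set to be the branches and $\om_1$-cofinal nodes of a tree $T\subseteq 2^{<\om_2}$ with the cone topology, so that a basic neighbourhood of a branch $B$ is a tail cone $[B\restriction\alpha]$ containing many other branches and nodes; Lindel\"ofness is proved by showing that the subtree $T_\calU$ of nodes whose cone is not countably covered would otherwise contain a copy of the Cantor tree, and this is exactly where CH together with the Kurepa tree (resp.\ the $\om_2$-Aronszajn tree with no Cantor subtree obtained from $\square(\om_2)$) enters. The two factors $X_0$, $X_1$ are then obtained by splitting the neighbourhood filter of each countable-cofinality branch into ``even'' and ``odd'' pieces along a fixed ladder (and, for the character-$\om_1$ points, by blowing them up into Sorgenfrey and reverse-Sorgenfrey copies of $\bbR$), so that the two pieces meet only in the point itself and the diagonal becomes closed discrete in $X_0\times X_1$ while each factor stays Lindel\"of. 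Nothing in your outline performs this splitting of a single accumulation structure into two complementary ones, and without it the construction cannot work.
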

An \emph{$\om_1$-Kurepa tree} is an $\om_1$-tree having strictly more than
$\om_1$ cofinal branches.
We say that $\square(\om_2)$ \emph{holds} if there exists a sequence $\seq{c_\alpha \mid \alpha<\om_2}$
such that for each $\alpha<\om_2$, $c_\alpha$ is a club in $\alpha$,
$c_\beta =c_\alpha \cap \beta$ for every $\beta$ from the limit points of $c_\alpha$,
and there is no club $D$ in $\om_2$ such that
$D \cap \alpha=c_\alpha$ for every $\alpha$ from the limit points of $D$.

This theorem has some interesting consequences.
It is known that the following hold under $V=L$:
\begin{enumerate}
\item CH holds (G\"odel).
\item There exists an $\om_1$-Kurepa tree (Solovay, e.g., see Theorem 27.8 in Jech \cite{Jech}).
\end{enumerate}
Hence we have alternative proof of the following result by Shelah \cite{Shelah}:
\begin{cor}[Shelah \cite{Shelah}]\label{1.3+}
Suppose $V=L$.
Then there are regular $T_1$ Lindel\"of spaces $X_0, X_1$ with points $G_\delta$
such that $e(X_0 \times X_1)>2^\om$.
\end{cor}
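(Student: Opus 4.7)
The plan is immediate: apply Theorem \ref{thm1}. To do so I need only verify that the hypotheses of that theorem are available in $L$, and the enumeration preceding the corollary already records exactly this.

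First, I would invoke G\"odel's classical theorem that $V=L$ implies the Generalized Continuum Hypothesis; in particular CH holds and $2^\om=\om_1$. Second, I would invoke Solovay's construction (recorded as Theorem 27.8 in Jech, as cited in the excerpt) producing an $\om_1$-Kurepa tree in $L$. With both CH and an $\om_1$-Kurepa tree in hand, Theorem \ref{thm1} applies directly through its Kurepa-tree alternative and yields regular $T_1$ Lindel\"of spaces $X_0, X_1$ with points $G_\delta$ whose product has extent strictly greater than $2^\om$.

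There is essentially no obstacle here: this corollary is a one-line deduction once Theorem \ref{thm1} is in place, and the verification of the two hypotheses is entirely classical. Indeed, one could equally follow the square-principle route, since Jensen's fine-structural analysis of $L$ yields $\square_{\om_1}$, which in turn implies Todor\v cevi\'c's $\square(\om_2)$; either path closes the argument, but the Kurepa-tree route is the more direct and is the one flagged by the enumeration preceding the statement.
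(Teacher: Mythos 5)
Your proof is correct and is exactly the paper's argument: the corollary is obtained by combining G\"odel's CH and Solovay's $\om_1$-Kurepa tree in $L$ with the Kurepa-tree alternative of Theorem \ref{thm1}. The aside about the $\square(\om_2)$ route via $\square_{\om_1}$ is also valid but not needed.
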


It is also known that if $\square(\om_2)$ fails then
 $\om_2$ is weakly compact in $L$ (Todor\v cevi\'c, (1.10) in Todor\v cevi\'c \cite{T}).
\begin{cor}
Suppose CH.
If $e(X_0 \times X_1) \le 2^\om$ for every 
regular $T_1$ Lindel\"of spaces $X_0, X_1$ with points $G_\delta$,
then $\om_2$ is weakly compact in the constructible universe $L$.
\end{cor}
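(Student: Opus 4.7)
The plan is to prove the contrapositive. I will assume that $\om_2$ is not weakly compact in $L$ and, together with the standing CH hypothesis, derive the existence of regular $T_1$ Lindel\"of spaces $X_0, X_1$ with points $G_\delta$ for which $e(X_0 \times X_1) > 2^\om$; this will contradict the hypothesis of the corollary.

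The first step is to transport the failure of weak compactness of $\om_2$ in $L$ into a combinatorial statement holding in $V$. This is where the cited theorem of Todor\v cevi\'c (line (1.10) in \cite{T}), mentioned immediately before the corollary, enters: if $\square(\om_2)$ fails, then $\om_2$ is weakly compact in $L$. Taking the contrapositive, if $\om_2$ is not weakly compact in $L$, then $\square(\om_2)$ must hold in $V$. This is essentially a one-line invocation, with no extra work required.

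The second step feeds the two hypotheses into Theorem \ref{thm1}. With CH and $\square(\om_2)$ both available, the $\square(\om_2)$ disjunct of the theorem applies and delivers regular $T_1$ Lindel\"of spaces $X_0, X_1$ with points $G_\delta$ satisfying $e(X_0 \times X_1) > 2^\om$, contradicting our assumption. There is no serious technical obstacle: the corollary is by design a downstream consequence, and the only decision in the argument is to feed Theorem \ref{thm1} through its $\square(\om_2)$ branch rather than its $\om_1$-Kurepa tree branch, since it is the former that is directly triggered by the failure of weak compactness of $\om_2$ in $L$.
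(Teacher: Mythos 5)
Your proof is correct and is exactly the argument the paper intends: the corollary is stated immediately after the citation of Todor\v cevi\'c's result precisely because it follows by taking the contrapositive of that result and feeding CH together with $\square(\om_2)$ into the $\square(\om_2)$ branch of Theorem \ref{thm1}. No gaps.
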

This shows that the non-existence of such Lindel\"of spaces would have a large cardinal strength
(if it is consistent).

A very rough sketch of our construction is as follows.
For a certain Hausdorff Lindel\"of space,
we modify open neighborhoods of each points of the space and  
construct finer Lindel\"of spaces $X_0$ and $X_1$ such that
for each $x \in X$, there are open sets $O_0 \subseteq  X_0$ and $O_1 \subseteq X_1$ with
$O_0 \cap O_1=\{x\}$. Clearly the diagonal of $X_0 \times X_1$ is a large closed discrete subset of
$X_0 \times X_1$. Basic idea of our construction come from Usuba \cite{Usuba2}.

\subsection*{Acknowledgements}
The author would like to thank the referee for many useful comments and suggestions.
This research was suppoted by JSPS KAKENHI Grant Nos. 18K03403 and 18K03404.


\section{Modifying points with character $\om_1$}
\begin{prop}\label{prop1}
Let $X$ be a Hausdorff Lindel\"of space of size $>2^\om$,
and $X_0$, $X_1$ be regular $T_1$ Lindel\"of spaces of character $\le \om_1$
such that:
\begin{enumerate}
\item $X_0$ and $X_1$ have the same underlying sets to $X$ and  topologies of $X_0$ and $X_1$ are finer than $X$.
\item For every $x \in X$, $\chi(x, X_0)=\chi(x, X_1)$.
\item For $x \in X$, if $\chi(x, X_0)=\chi(x, X_1)=\om_1$ then 
there exists a sequence $\seq{O_\alpha^x:\alpha<\om_1}$ with the following properties:
\begin{enumerate}
\item $O_\alpha^x$ is clopen in $X$.
\item $O_{\alpha}^x \supseteq O_{\alpha+1}^x$.
\item $O_\alpha^x=\bigcap_{\beta<\alpha} O_\beta^x$ if $\alpha$ is limit.
\item $\bigcap_{\alpha<\om_1} O_\alpha^x=\{x\}$.
\end{enumerate}
\item For $x \in X$, if $\chi(x, X_0)=\chi(x, X_1)=\om$ then 
there are open sets $O_0 \subseteq X_0$ and $O_1 \subseteq X_1$
respectively with $O_0 \cap O_1=\{x\}$.
\end{enumerate}
Then there are regular $T_1$ Lindel\"of spaces $Y_0$ and $Y_1$ with points $G_\delta$
such that $e(Y_0 \times Y_1)=\size{X}>2^\om$.
\end{prop}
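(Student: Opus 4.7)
The plan is to refine the topologies $X_0$ and $X_1$ at each point $x$ of character $\om_1$ by splitting the clopen ``shells'' of the chain $\seq{O_\alpha^x : \alpha < \om_1}$ into two disjoint cofinal halves and declaring one half to be a new open set in $Y_0$ and the complementary half a new open set in $Y_1$. The resulting spaces will remain regular and $T_1$, acquire points $G_\delta$, and admit, at every point $x$, a pair of opens $U_0 \in Y_0$ and $U_1 \in Y_1$ with $U_0 \cap U_1 = \{x\}$; this forces the diagonal of $Y_0 \times Y_1$ to be closed discrete of cardinality $\size{X}$.

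Concretely, I fix a partition $\om_1 = S_0 \sqcup S_1$ into two disjoint cofinal sets (say, stationary). For $x$ of character $\om_1$, set $L_\alpha^x := O_\alpha^x \setminus O_{\alpha+1}^x$; condition (3) together with continuity at limits shows that each $L_\alpha^x$ is clopen in $X$ and that $\{L_\alpha^x : \alpha < \om_1\}$ partitions $O_0^x \setminus \{x\}$. I then put
\[
V_i^x := \{x\} \cup \bigcup_{\alpha \in S_i} L_\alpha^x, \qquad i \in \{0,1\},
\]
and let $Y_i$ be the topology on the underlying set of $X$ generated by $X_i$ together with the family $\{V_i^x : \chi(x, X_i) = \om_1\}$. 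Since the complement $(X \setminus O_0^x) \cup \bigcup_{\alpha \in S_{1-i}} L_\alpha^x$ of $V_i^x$ is $X$-open, the set $V_i^x$ is clopen in $Y_i$, so $Y_i$ is zero-dimensional, hence $T_1$ and regular. Points $G_\delta$ is automatic: at character-$\om$ points it is inherited, while at a character-$\om_1$ point $x$, any cofinal $\gamma_n \uparrow \om_1$ yields $\{x\} = \bigcap_n O_{\gamma_n}^x$ by continuity. For the diagonal: at character-$\om_1$ points $V_0^x \cap V_1^x = \{x\}$ since $S_0 \cap S_1 = \emptyset$, and at character-$\om$ points condition (4) supplies the required pair; hence $\{(x,x) : x \in X\}$ is closed discrete in $Y_0 \times Y_1$, giving $e(Y_0 \times Y_1) = \size{X}$.

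The hard part is verifying that $Y_i$ is Lindel\"of. A basic $Y_0$-open has the form $W \cap V_0^{y_1} \cap \dots \cap V_0^{y_k}$ with $W$ an $X_0$-open, and every point $z \neq y_j$ in such a basic set enjoys an $X_0$-open neighbourhood inside it (replace each $V_0^{y_j}$ by the $X_0$-clopen piece $L_{\alpha_j}^{y_j}$ containing $z$). Given a $Y_0$-open cover $\mathcal{U}$, the plan is to split $X$ into the non-central points, which are reached by a countable $X_0$-subcover via Lindel\"ofness of $X_0$, and the ``critical'' character-$\om_1$ centres, which must be handled in bulk by exploiting the Hausdorff Lindel\"ofness of the coarser space $X$ together with the clopen-chain structure furnished by condition (3). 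Making this bookkeeping succeed -- in particular, arguing that only countably many centres are ever needed -- is the crux of the proof, and is where any genuine content beyond the formal construction resides.
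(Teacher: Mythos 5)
Your construction has a fatal gap at the very point the proposition is designed to address: making the character-$\om_1$ points into $G_\delta$ points. You write that ``any cofinal $\gamma_n \uparrow \om_1$ yields $\{x\}=\bigcap_n O^x_{\gamma_n}$ by continuity,'' but $\om_1$ has uncountable cofinality, so no such countable cofinal sequence exists; that is precisely why $\chi(x,X_0)=\om_1$ rather than $\om$. In your $Y_0$ the only new neighbourhood of such an $x$ is the single set $V_0^x$, and any countable intersection of basic neighbourhoods of $x$ still contains $V_0^x \cap O^x_{\beta}$ for some $\beta<\om_1$, hence contains $\bigcup\{L^x_\alpha \mid \alpha\in S_0,\ \alpha\ge\beta\}\neq\emptyset$. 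So $x$ is not a $G_\delta$ point of $Y_0$, and the conclusion of the proposition fails for your spaces. Splitting the shells into two cofinal halves does isolate $x$ in $V_0^x\cap V_1^x$ and would make the diagonal discrete, but it cannot repair the character/$G_\delta$ problem, which is the actual content here. A second, independent gap is that the Lindel\"of verification is only a declared ``plan''; you yourself flag that showing ``only countably many centres are ever needed'' is the crux, and nothing in the proposal addresses it.

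The paper's proof resolves the $G_\delta$ obstruction differently: each point $x$ with $\chi(x,X_0)=\om_1$ is \emph{blown up} into a copy $\{x\}\times\bbR$ of the real line, and an injection $\sigma:\om_1\to\bbR$ is used to distribute the $\om_1$-many shells $O^x_\beta\setminus O^x_{\beta+1}$ among the reals: the basic neighbourhoods of $\seq{x,r}$ in $Y_0$ are the sets $O(x,\alpha,W)=\bigcup\{\bbr{O^x_\beta\setminus O^x_{\beta+1}}\mid \beta\ge\alpha,\ \sigma(\beta)\in W\}\cup(\{x\}\times W)$ with $W$ open in the Sorgenfrey line (and in the reverse Sorgenfrey line for $Y_1$). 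Because $\sigma$ is injective, at most one shell has $\sigma(\beta)=r$, so countably many such neighbourhoods isolate $\seq{x,r}$ as a $G_\delta$ point; the pair $[r,r+1)$, $(r-1,r]$ then isolates $\seq{x,r}$ in the intersection of a $Y_0$-open and a $Y_1$-open, and the blown-up diagonal has size $\size{X}\cdot 2^\om=\size{X}$. The regularity, $T_1$, and Lindel\"ofness of the resulting spaces are quoted from Proposition~1.2 of an earlier paper of the author. If you want to salvage your approach you would need to replace the single partition $S_0\sqcup S_1$ by a device that converts the $\om_1$-indexed shell structure into countably many open sets at each centre --- which is exactly what the $\bbR$-fibre with the Sorgenfrey topologies accomplishes.
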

\begin{proof}
First, fix an injection $\sigma: \om_1 \to \bbR$ where $\bbR$ is the real line.
Let $X'=\{x \in X_0 \mid \chi(x, X_0)=\om_1\}=\{x \in X_1 \mid \chi(x, X_1)=\om_1\}$.
For a set $A \subseteq X$, let $\bbr{A}=\bigcup \{\{x\} \times \bbR \mid x \in A \cap X' \} \cup (A \setminus X')$.

For $x \in X'$, $\alpha<\om_1$, and a set $W \subseteq \bbR$,
let $O(x, \alpha, W)=\bigcup \{\bbr{O^x_\beta \setminus O^x_{\beta+1}} \mid \beta \ge \alpha,
\sigma(\beta) \in W\} \cup (\{x\}\times W)$.

For  constructing $Y_0$,
let $S$ be the Sorgenfrey line,
that is, the underlying set of $S$ is the real line $\bbR$,
and the topology is generated by the family $\{[r,s) \mid r,s \in \bbR\}$ as an open base.
It is known that $S$ is a first countable regular $T_1$ Lindel\"of space.

We define $Y_0$ in the following manner. The underlying set of $Y_0$ is $\bbr{X}$.
The topology of $Y_0$ is generated by the family
$\{ \bbr{O} \mid O \subseteq X_0$ is open$\} \cup \{O(x, \alpha, W) \mid
x \in X', \alpha<\om_1, W \subseteq S$ is open$\}$
as an open base.
We know that $Y_0$ is a regular $T_1$ Lindel\"of space with points $G_\delta$
(see Proposition 1.2 in \cite{Usuba}).

For $Y_1$, let $S^*$ be the space $\bbR$ equipped with the reverse Sorgenfrey topology,
that is, the topology generated by the family $\{(r,s] \mid r,s \in \bbR\}$ as an open base.
As with $S$, $S^*$ is a first countable regular $T_1$ Lindel\"of space.
Then we define $Y_1$ by the same way to $Y_0$ but replacing $X_0$ by $X_1$ and $S$ by $S^*$.
Again, $Y_1$ is a regular $T_1$ Lindel\"of space with points $G_\delta$.

To show that $e(Y_0 \times Y_1)=\size{X}>2^\om$,
let $\Delta=\{\seq{x,x} \mid x \in X \setminus X'\} \cup
\{\seq{ \seq{x,r},\seq{x,r}} \mid x \in X', r \in \bbR\}$.
We see that $\Delta$ is closed and discrete.

For the closeness of $\Delta$, take $p \in (Y_0\times Y_1) \setminus \Delta$.

Case 1: $p=\seq{x,y}$ for some $x,y \in X \setminus X'$.
Since $X$ is Hausdorff, there are disjoint open sets $O_0, O_1 \subseteq X$ with
$x \in O_0$ and $y \in O_1$.
Since $X_0$ and $X_1$ are finer than $X$, $O_0$ and $O_1$ are open in
$X_0$ and $X_1$ respectively.
Then $\bbr{O_0} \subseteq Y_0$ is open with $x \in \bbr{O_0}$,
$\bbr{O_1} \subseteq Y_1$ is open with $y \in \bbr{O_1}$,
and $\bbr{O_0} \cap \bbr{O_1}=\emptyset$.
Hence $\seq{x,y} \in \bbr{O_0} \times \bbr{O_1}$ and $\Delta \cap (\bbr{O_0} \times \bbr{O_1})=\emptyset$.

Case 2: $p=\seq{x, \seq{y,r}}$ for some $x \in X \setminus X'$, $y \in X'$, and $r\in \bbR$.
Again, take open sets $O_0, O_1 \subseteq X$
such that $x \in O_0$, $y \in O_1$, and $O_0 \cap O_1=\emptyset$.
Then $x \in \bbr{O_0}$, $\seq{y,r} \in \bbr{O_1}$, and
$\bbr{O_0} \cap \bbr{O_1}=\emptyset$.
So $p \in \bbr{O_0} \times \bbr{O_1}$ and $\Delta \cap (\bbr{O_0} \times \bbr{O_1})=\emptyset$.

Case 3: $p=\seq{\seq{x,r}, y}$ for some $x \in X'$, $y \in X \setminus X'$, and $r\in \bbR$.
Similar to Case 2.

Case 4: $p=\seq{\seq{x,r}, \seq{y,s}}$ for some $x, y \in X'$ and $r, s \in \bbR$.
If $x \neq y$, we can take open sets $O_0, O_1 \subseteq X$
with $x \in O_0$, $y \in O_1$, and $O_0 \cap O_1=\emptyset$.
Then $\bbr{O_0} \times \bbr{O_1}$ is a required set.
If $x=y$ and $r \neq s$,
take open sets $W_0, W_1 \subseteq \bbR$ with $r \in W_0$, $s \in W_1$, and $W_0 \cap W_1=\emptyset$.
Now $\seq{x,r} \in O(x, 0, W_0)$, $\seq{y,s} \in O(y, 0, W_1)$, and
$O(x, 0, W_0) \cap O(y,0,W_1)=\emptyset$.
Hence $p \in O(x, 0, W_0) \times O(y,0,W_1)$
and $\Delta \cap (O(x, 0, W_0) \times O(y,0,W_1))=\emptyset$.

Next we see that $\Delta$ is discrete.
For $x \in X \setminus X'$, by the assumption,
there are open sets $O_0 \subseteq X_0$ and $O_1 \subseteq X_1$ respectively with
$O_0 \cap O_1=\{x\}$.
Then it is clear that $\bbr{O_0} \cap \bbr{O_1}=\{x\}$,
hence $\Delta \cap (\bbr{O_0} \times \bbr{O_1})=\{x\}$.
For $x \in X'$ and $r \in \bbR$,
consider open sets $W_0=[r, r+1)$ in $S$ and $W_1=(r-1, r]$ in $S^*$.
Trivially $W_0 \cap W_1=\{r\}$.
Then, by the definitions of $O(x, 0, W_0) \subseteq Y_0$ and
$O(x, 0, W_1) \subseteq Y_1$, we have
$O(x, 0, W_0) \cap O(x,0, W_1)=\{\seq{x, r}\}$.
Thus $\Delta \cap (O(x,0,W_0) \times O(x, 0, W_1))=\{\seq{x,r}\}$,
as required.
\end{proof}
A space $X$ is said to be a \emph{$P$-space} if
every $G_\delta$ subset of $X$ is open.
If $X$ is a regular $T_1$ Lindel\"of $P$-space of character $\le \om_1$,
then  every point $x \in X$ with $\chi(x, X)=\om$ is isolated in $X$. 
Hence $X=X_0=X_1$ satisfy the assumptions of the previous proposition.
\begin{cor}
If there exists a regular $T_1$ Lindel\"of $P$-space of character $\le \om_1$ and size $>2^\om$,
then there are regular $T_1$ Lindel\"of spaces $Y_0$, $Y_1$ with points $G_\delta$
such that $e(Y_0 \times Y_1)>2^\om$.
\end{cor}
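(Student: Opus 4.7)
The plan is to apply Proposition~\ref{prop1} directly, taking $X_0 = X_1 = X$ to be the given regular $T_1$ Lindel\"of $P$-space. Clauses (1) and (2) of the proposition are immediate since both factors coincide with $X$, and clause (4) reduces to a triviality by the observation stated just before the corollary: if $\chi(x, X) = \om$ then $\{x\}$ is a $G_\delta$, hence open by the $P$-space property, so $x$ is isolated and we may take $O_0 = O_1 = \{x\}$.

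The substantive verification is clause (3). I would first establish that $X$ has a clopen neighborhood base at every point. A regular Lindel\"of space is normal, so given $x \in U$ with $U$ open, Urysohn's lemma yields a continuous $f : X \to [0,1]$ with $f(x) = 0$ and $f \equiv 1$ on $X \setminus U$. Then $f^{-1}(\{0\}) = \bigcap_{n<\om} f^{-1}([0,1/n))$ is a closed $G_\delta$, which the $P$-space property promotes to an open set; it is therefore clopen, contains $x$, and sits inside $U$.

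Armed with this, for each $x$ with $\chi(x, X) = \om_1$ I would fix an open local base $\{U_\alpha : \alpha < \om_1\}$ at $x$ and build $\seq{O_\alpha^x : \alpha < \om_1}$ by transfinite recursion: pick $O_0^x$ to be any clopen neighborhood of $x$; at successor stages pick $O_{\alpha+1}^x$ clopen with $x \in O_{\alpha+1}^x \subseteq O_\alpha^x \cap U_\alpha$; at limits set $O_\alpha^x = \bigcap_{\beta < \alpha} O_\beta^x$, which is a countable intersection of clopen sets and so is itself clopen by the $P$-space property. Clauses (a)--(c) of (3) are then built into the construction. For (d), note that the sequence is decreasing and $O_{\alpha+1}^x \subseteq U_\alpha$ for every $\alpha$, so $\bigcap_{\alpha < \om_1} O_\alpha^x \subseteq \bigcap_{\alpha < \om_1} U_\alpha = \{x\}$, where the last equality uses $T_1$-ness together with the local base property.

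The only genuinely technical step is the zero-dimensionality argument — making sure normality, Urysohn, and the $P$-space hypothesis combine to produce clopen refinements. Once that is in place, the rest is a routine transfinite construction, and Proposition~\ref{prop1} immediately delivers regular $T_1$ Lindel\"of spaces $Y_0, Y_1$ with points $G_\delta$ and $e(Y_0 \times Y_1) = |X| > 2^\om$.
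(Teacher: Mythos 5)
Your proposal is correct and follows exactly the route the paper intends: the paper's entire argument for this corollary is the one-line remark that in a regular $T_1$ Lindel\"of $P$-space every point of countable character is isolated, so $X=X_0=X_1$ satisfies the hypotheses of Proposition~\ref{prop1}. You have merely filled in the routine details (normality plus Urysohn plus the $P$-space property giving clopen neighborhood bases, and the transfinite recursion for clause (3)) that the paper leaves implicit, and these details check out.
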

It is known that such a $P$-space exists under $V=L$
(Juh\'asz-Weiss \cite{JW}),
so this fact yields one more another proof of Corollary \ref{1.3+}.
\section{Modifying points with character $\om$}
For our convenience, we fix some notations and definitions.
For an ordinal $\alpha$,
let $2^\alpha$ be the set of all functions from $\alpha$ to $2$,
and $2^{<\alpha}$ ($2^{\le \alpha}$, respectively) be
$\bigcup_{\beta<\alpha} 2^\beta$ ($\bigcup_{\beta \le \alpha} 2^{\beta}$, respectively).
We say that $T$ is a \emph{tree} if $T$ is a subset of $2^{<\alpha}$ for some ordinal $\alpha$ such that
$T$ is downward closed, that is, for every $s \in T$ and $t \in 2^{<\alpha}$,
if $t \subseteq s$ then $t \in T$.
For $s, t \in T$, define $s \le t \iff s \subseteq t$,
and $s <t \iff s \subsetneq t$.
A \emph{branch} of a tree $T$ is a maximal chain of $T$.
If $B$ is a branch, then $\bigcup B$ is a function with $\bigcup B \in 2^{\le \alpha}$
and $B=\{\bigcup B\restriction \beta \mid \beta<\dom(\bigcup B)\}$.
Because of this reason, we identify a branch $B$ as the function $\bigcup B$.
\emph{Cantor tree} is the tree $2^{\le \om}$.
We say that $\sigma :2^{<\om} \to 2^{<\alpha}$ is an \emph{embedding}
if $s < t \iff \sigma(s) <\sigma(t)$ for every $s,  t \in 2^{<\om}$.
Every embedding $\sigma:2^{<\om} \to 2^{<\alpha}$  canonically induces the map
$\sigma^*:2^{\om} \to 2^{\le \alpha }$ as
$\sigma^*(f)=\bigcup_{n<\om} \sigma(f \restriction n)$.
Note that a tree $T$ does not contain an isomorphic copy of Cantor tree
if and only if for every embedding $\sigma:2^{<\om} \to T$
there is $f \in 2^\om$ with $\sigma^*(f) \notin T$.

\begin{prop}\label{prop2}
Assume CH. Suppose there exists a tree $T \subseteq 2^{<\om_2}$ such that:
\begin{enumerate}
\item Each level of $T$ has cardinality at most $\om_1$.
\item $T$ has no branch of size $\om_2$.
\item $\size{T} >2^\om$, or $T$ has strictly more than $2^{\om}$ many branches.
\item $T$ does not contain an isomorphic copy of Cantor tree.
\end{enumerate}
Then there exist  zero-dimensional $T_1$ Lindel\"of spaces $X$, $X_0$, $X_1$ which
satisfy the assumptions of Proposition \ref{prop1}.
\end{prop}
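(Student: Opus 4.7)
My plan is to take the common underlying set of all three spaces to be $X := T \cup [T]$, where $[T]$ denotes the set of maximal branches of $T$ identified with their union functions $f : \alpha \to 2$; by (2), each such $\alpha$ is strictly below $\om_2$, and under CH hypothesis (3) delivers $\size{X} > 2^\om$. For the topology on $X$ I declare each cone $\hat U_s := \{f \in X : s \subseteq f\}$ (for $s \in T$) to be clopen and take basic open sets of the form $\hat U_{f \restriction \beta} \setminus (\hat U_{t_1} \cup \cdots \cup \hat U_{t_k})$ with $\beta < \dom(f)$ and each $t_i$ properly extending $f \restriction \beta$ in $T$; this gives a zero-dimensional $T_1$ topology on $X$.

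A point $f \in X$ with $\cf(\dom(f)) = \om_1$ has character exactly $\om_1$, and the sequence $\seq{\hat U_{f \restriction \alpha_\xi} : \xi < \om_1}$, for any cofinal $\seq{\alpha_\xi}$ in $\dom(f)$, supplies the descending clopen base that condition (3) of Proposition~\ref{prop1} demands. All other non-isolated points have character $\om$. To build $X_0$ and $X_1$, I keep the topology unchanged at character-$\om_1$ points (which automatically secures (1), (2), and (3) of Proposition~\ref{prop1}) and refine only the neighborhoods of character-$\om$ points. For each such $f$, fix a cofinal $\om$-sequence $\seq{\beta_n : n < \om}$ in $\dom(f)$ and, using the lexicographic order on $2^{<\om_2}$, split each cone $\hat U_{f \restriction \beta_n}$ into a left half $L_n(f) := \{g \in \hat U_{f \restriction \beta_n} : g \le_{\mathrm{lex}} f\}$ and a right half $R_n(f)$ (defined analogously). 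I add the $L_n(f)$'s to the neighborhood base at $f$ in $X_0$ and the $R_n(f)$'s in $X_1$. Then $L_n(f) \cap R_n(f) = \{f\}$, so condition (4) of Proposition~\ref{prop1} is realized.

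The main obstacle is Lindel\"ofness. For $X$, given a cover $\calU$ with no countable subcover, I try to build an embedding $\sigma : 2^{<\om} \to T$ recursively, at each stage choosing an $s \in T$ whose cone $\hat U_s$ is not countably covered by $\calU$ and two incomparable extensions of $s$ in $T$ with the same property. Success yields an isomorphic copy of the Cantor tree in $T$, contradicting (4). If the recursion stalls --- at most one uncoverable extension exists at some node --- I follow the unique uncoverable branch $B$ through that node and use CH, the level bound $\size{T_\gamma} \le \om_1$, and the fact from (2) that $B$ has length $< \om_2$ to exhibit a countable subcover along $B$ and derive a contradiction.

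The delicate point, which I expect to be the principal obstacle, is extending this argument to $X_0$ and $X_1$. The extra open sets $L_n(f), R_n(f)$ are one-sided Sorgenfrey-style refinements, and a stalling branch of cofinality $\om$ interacts with them in a way not captured by the underlying cone topology alone. Here I will combine the Lindel\"ofness of the Sorgenfrey line (to cover each ``side'' countably) with the $X$-argument above, using condition (4) applied to $T$ itself --- not to any refinement --- to rule out the left- and right-stalling alternatives simultaneously, and CH once more to control the $\om_1$ many one-sided pieces meeting at each branch endpoint.
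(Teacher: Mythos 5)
There is a genuine gap, on two levels. First, the underlying set $T \cup [T]$ does not work with the topology you describe. If $t \in T$ sits at a successor level $\gamma+1$, then every basic neighborhood $\hat U_{t \restriction \beta} \setminus (\hat U_{t_1} \cup \dots \cup \hat U_{t_k})$ of $t$ has $\beta \le \gamma$, and it necessarily contains the point $t \restriction \gamma$ (any removed cone $\hat U_{t_i}$ containing $t\restriction\gamma$ would also contain $t$); so the space is not $T_1$. If instead you allow $\beta = \dom(t)$, then $\hat U_t \setminus (\hat U_{t^\frown\seq{0}} \cup \hat U_{t^\frown\seq{1}}) = \{t\}$ isolates every node, and since $\size{T} = \om_2 > \om$ the space cannot be Lindel\"of. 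This is exactly why the paper admits as points only the branches together with $T^* = \{t \in T \mid \cf(\dom(t)) = \om_1\}$, and gives the $T^*$-points the special neighborhoods $[s] \setminus [t]^+$. Relatedly, for a node $t$ of character $\om_1$ your proposed clopen base $\seq{\hat U_{t\restriction\alpha_\xi}}$ has intersection equal to the whole cone above $t$, not $\{t\}$, so condition (3)(d) of Proposition \ref{prop1} fails unless you also remove $[t]^+$.

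The deeper problem is the Lindel\"ofness of $X_0$ and $X_1$, which you correctly identify as the principal obstacle but do not actually address. The paper's contradiction at the end of the Lindel\"of proof requires that some basic neighborhood of some $B \in \calB_0$ contain an \emph{entire} cone $[t]$ with $t \in T_\calU$; a set of the form $L_n(B)$ (a lexicographic half of a cone) never contains a full cone of any node, so the argument collapses, and the appeal to Sorgenfrey-line Lindel\"ofness does not help because the genuine difficulty is the interaction of the one-sided refinements attached to uncountably many branches across uncountably many levels (one-sided splittings of non-separable orders are typically not Lindel\"of). The paper's solution is a coordination device that your proposal has no analogue of: neighborhoods of $B \in \calB_0$ are interleaved unions of \emph{annuli} $[B\restriction\delta^B_n] \setminus [B\restriction\delta^B_{n+1}]$ (even-indexed for $X_0$, odd-indexed for $X_1$, which already gives condition (4) since the two unions meet only in $\{B\}$), and the sequences $\seq{\delta^B_n}$ are chosen relative to an injection $\varphi$ from the set of \emph{good} embeddings of $2^{<\om}$ into $\calB_0$ with $\varphi(\sigma) \in \mathrm{Range}(\sigma^*)$ --- whose existence needs CH, the level bound $\om_1$, and two preliminary claims (every embedding refines to a good one; a good embedding has uncountably many images outside $T$) --- so that each annulus of $B = \varphi(\tau)$ swallows a full cone $[\tau(f\restriction l^\frown\seq{1-f(l)})]$. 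That mechanism is the mathematical core of Proposition \ref{prop2}, and it is absent from your proposal.
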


Now Theorem \ref{thm1} follows from Propositions \ref{prop1} and \ref{prop2}:
If there exists an $\om_1$-Kurepa tree $T \subseteq 2^{<\om_1}$,
by CH, $T$ satisfies the assumptions of Proposition \ref{prop2}.
If $\square(\om_2)$ holds,
then there is an $\om_2$-Aronszajn tree $T \subseteq 2^{<\om_2}$ which does not contain an isomorphic copy of Cantor tree
(Todor\v cevi\'c, (1.11) in \cite{T}.
See also Corollary 3.10 in K\"onig \cite{Konig}).
It is clear that $T$ fulfills the assumptions of Proposition \ref{prop2}.

We start the proof of Proposition \ref{prop2}.
Fix a tree $T$ satisfying the assumptions.
We may assume that every $t \in T$ has two immediate successors $t^\frown \seq{0},t^\frown \seq{1}$ in $T$.


Let $T^*=\{t \in T \mid \cf(\dom(t))=\om_1\}$.
For $i=0,1,$ let $\calB_i$ be the set of all branches $B$ of $T$ with
$\cf(\dom(B))=\om_i$.
For $t \in T$, let $[t]=\{B \in \calB_0 \cup \calB_1 \mid t \in B\} \cup\{s \in T^* \mid t \le s\}$ and $[t]^{+}=[t^\frown \seq{0}] \cup [t^\frown \seq{1}]$.
Note that if $t \in T \setminus T^*$ then $[t]=[t]^+$.

First we define the space $X$.
The underlying set of $X$ is $\calB_0 \cup \calB_1 \cup T^*$.
The topology is generated by the family
\[
\{[t] \mid t \in T \setminus T^* \} \cup \{ [s]\setminus [t]^+ \mid 
t \in T^*, s \notin T^*, s<t\}\]
as an open base.
It is routine to check that $X$ is a zero-dimensional $T_1$ space of size $>2^\om$.
For $t \in T^*$, the family $\{ [t \restriction \alpha] \setminus [t]^+ \mid \alpha<\dom(t), \cf(\alpha) \neq \om_1
\}$ is a local base for $t$, and $\chi(t, X)=\om_1$.
For $B \in \calB_0 \cup \calB_1$, the family $\{[B \restriction \alpha] \mid \alpha<\dom(B),\cf(\alpha) \neq \om_1 \}$ is a local base for $B$.
It is clear that $\chi(B,X)=\om_i \iff B \in \calB_i$.

We prove that $X$ is Lindel\"of. 

\begin{claim}\label{3.2+}
$X$ is Lindel\"of.
\end{claim}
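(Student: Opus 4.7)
The plan is to prove the claim by contradiction. Suppose $\calU$ is an open cover of $X$ (which we may assume consists of basic open sets) with no countable subcover. Call $t \in T$ \emph{good} if $[t] \subseteq X$ is covered by countably many members of $\calU$, and \emph{bad} otherwise; badness is inherited downward in the tree order. The key step is to show that every bad $t$ admits two incomparable bad extensions in $T$. Once this is established, I recursively build an embedding $\sigma \colon 2^{<\om} \to T$ with all $\sigma(\tau)$ bad. For each $f \in 2^\om$, let $\gamma_f = \sup_n \dom(\sigma(f \restriction n))$ (of countable cofinality), and let $B_f$ be any branch of $T$ extending the chain $\{\sigma(f \restriction n)\}$. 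Either $\dom(B_f) = \gamma_f$ --- so $B_f \in \calB_0$ is a point of $X$, and some neighborhood $[B_f \restriction \alpha] \subseteq U \in \calU$ forces $\sigma(f \restriction n)$ to be good for $n$ with $\dom(\sigma(f \restriction n)) \ge \alpha$, contradicting badness --- or $\sigma^*(f) = B_f \restriction \gamma_f \in T$. If the second case holds for every $f$, then $\sigma$ extends to an embedding of the Cantor tree $2^{\le \om}$ into $T$, contradicting hypothesis (4).

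For the key step, suppose $t$ is bad and has no two incomparable bad extensions. Then $C = \{t' \ge t : t' \text{ bad}\}$ is a chain, and since it is downward closed above $t$, $C = \{B \restriction \alpha : \dom(t) \le \alpha < \gamma\}$ for some branch $B$ through $t$ and some stopping level $\gamma \le \dom(B)$. If $\gamma = \dom(B)$ then $B$ is a point of $X$, and a $\calU$-neighborhood of $B$ contains some $[B \restriction \alpha]$ with $\dom(t) \le \alpha < \gamma$, forcing $B \restriction \alpha \in C$ to be good -- contradiction. So $\gamma < \dom(B)$, and $s_\gamma := B \restriction \gamma$ lies in $T$ and is good (as $s_\gamma \notin C$). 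I then show $[t]$ is covered by countably many members of $\calU$, contradicting $t$ bad.

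Decompose $[t] = [s_\gamma] \cup W(\gamma)$, where $W(\gamma) = \bigcup_{\alpha \in [\dom(t), \gamma)} [r_\alpha] \cup \{B \restriction \beta : \beta \in [\dom(t), \gamma),\ \cf(\beta) = \om_1\}$ and $r_\alpha = (B \restriction \alpha)^\frown (1 - B(\alpha))$. Every $[r_\alpha]$ is good since $r_\alpha \notin C$, and $[s_\gamma]$ is good. I cover $W(\gamma)$ by countably many members of $\calU$ by transfinite induction on $\gamma$. The base case (countable $\gamma$), the successor case $\gamma = \gamma' + 1$, and the case $\cf(\gamma) = \om$ (using a countable cofinal sequence and the induction hypothesis on each term) are routine. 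The decisive case is $\cf(\gamma) = \om_1$: then $s_\gamma \in T^*$ is a point of $X$, so some $U \in \calU$ contains $s_\gamma$, and by the local base at $s_\gamma$ we have $U \supseteq [B \restriction \eta] \setminus [s_\gamma]^+$ for some $\eta < \gamma$ with $\cf(\eta) \ne \om_1$. A direct verification shows that for every $\alpha \in [\eta, \gamma)$, the node $r_\alpha$ extends $B \restriction \eta$ and is incomparable with $s_\gamma$ (since they disagree at position $\alpha < \gamma$), so $[r_\alpha] \subseteq U$; similarly $U$ covers every $T^*$-node $B \restriction \beta$ with $\beta \in [\eta, \gamma)$. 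Apply the induction hypothesis to $\eta < \gamma$ to cover the remaining $W(\eta)$.

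The main obstacle is precisely this cofinality-$\om_1$ step: $W(\gamma)$ can contain uncountably many disjoint clopen pieces $[r_\alpha]$, and the recursion only closes because one basic neighborhood of the single $T^*$-point $s_\gamma$ absorbs a whole tail of them at once. This is exactly what the basic open sets of the form $[s] \setminus [t']^+$ at $T^*$-nodes are engineered for. Combined with the cofinality analysis (reducing any $\gamma$ below $\dom(B) < \om_2$ to countably many sub-problems) and the use of hypothesis (4) in the final Cantor-tree contradiction, this completes the proof.
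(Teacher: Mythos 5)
Your proposal is correct and follows essentially the same route as the paper: your ``every bad node has two incomparable bad extensions'' step merges the paper's two separate verifications (that $T_\calU$ has no maximal element and is branching), your $W(\gamma)$ is exactly the paper's $[t_0]\setminus[t_\alpha]$, and the transfinite induction --- with the decisive absorption of an uncountable tail by a single basic neighborhood $[B\restriction\eta]\setminus[s_\gamma]^+$ at cofinality-$\om_1$ levels --- and the final Cantor-tree contradiction via hypothesis (4) are the same. No gaps.
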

\begin{proof}
Let $\calU$ be an open cover of $X$.
Let $T_{\calU}$ be the set of all $t \in T$ such that
there is no countable subfamily $\calV \subseteq \calU$ with
$[t] \subseteq \bigcup \calV$.
If $T_{\calU} =\emptyset$, then 
$[\emptyset] \subseteq \calV$ for some countable $\calV \subseteq \calU$,
and $\calV$ is a countable cover of $X$.
Thus it is enough to see that $T_\calU =\emptyset$.

Suppose to the contrary that $T_\calU \neq \emptyset$.
We note that for $t \in T_{\calU}$ and $s \in T$, if $s \le t$ then $s \in T_{\calU}$.
Hence $T_\calU$ is a subtree of $T$.

First we check that $T_{\calU}$ has no maximal element.
Suppose not and take $t \in T_{\calU}$ which is a maximal element of $T_{\calU}$.
Then $t^\frown \seq{0}, t^\frown\seq{1}$ are elements of $T$ but not of $T_{\calU}$.
Thus there are countable subfamilies $\calV_0, \calV_1 \subseteq \calU$ with
$[t^\frown \seq{i}] \subseteq \bigcup \calV_i$ for $i=0,1$.
If $t \notin T^*$, then $[t]= [t]^+ \subseteq \bigcup (\calV_0 \cup \calV_1)$,
thus we have $t \in T_{\calU}$. This is a contradiction.
If $t \in T^*$,
pick $O \in \calU$ with $t \in O$.
Then $[t]= \{t\}\cup [t]^+ \subseteq O \cup \bigcup (\calV_0 \cup \calV_1)$,
this is a contradiction too.

Next we check that $T_{\calU}$ is branching.
Suppose not, and take $t_0 \in T_{\calU}$ 
such that every $t \in T_{\calU}$ with $t_0 \le t$ has only one immediate successor in $T_{\calU}$.
Let $C=\{t \in T_{\calU} \mid t_0 \le t\}$.
$C$ is a chain of $T$. By the assumption,
we have that $\size{C} \le \om_1$.
Let $\seq{t_\alpha \mid \alpha<\gamma}$ be the increasing enumeration of $C$.
We know that $\gamma$ is a limit ordinal with $\gamma<\om_2$.
By induction on $\alpha<\gamma$, we claim that 
there is a countable $\calV \subseteq \calU$ with $[t_0] \setminus [t_\alpha] \subseteq \bigcup \calV$.
The case $\alpha=0$ is trivial.
If $\alpha=\beta+1$ and $\cf(\beta)=\om_1$,
then $t_\beta \in T^*$ and $[t_0] \setminus [t_\alpha]=
([t_0] \setminus [t_\beta]) \cup [t_\beta^\frown \seq{1-t_\alpha(\dom(t_\beta))}] \cup \{t_\beta\}$.
Take a countable $\calV \subseteq \calU$ with
$[t_0] \setminus [t_\beta] \subseteq \bigcup \calV$.
Because $t_\alpha^\frown \seq{1-t(\dom(t_\beta))} \notin T_{\calU}$,
there is a countable $\calV' \subseteq \calU$
with $[t_\beta^\frown \seq{1-t_\alpha(\dom(t_\beta))}] \subseteq \calV'$.
Then $[t_0] \setminus [t_\alpha] \subseteq O \cup \bigcup (\calV \cup \calV') $ for some $O \in \calU$ with $t_\beta \in O$.
The case that $\alpha=\beta+1$ and $\cf(\beta) \neq \om_1$ is similar.
Suppose $\alpha$ is a limit ordinal.
If $\cf(\alpha)=\om$,
take an increasing sequence $\seq{\alpha_n \mid n<\om}$ with limit $\alpha$.
By the induction hypothesis,
for $n<\om$ there is a countable $\calV_n \subseteq \calU$
with $[t_0] \setminus [t_{\alpha_n}] \subseteq \bigcup \calV_n$.
$[t_0] \setminus [t_\alpha]=\bigcup_{n<\om} ([t_0] \setminus [t_{\alpha_n}])$,
hence $[t_0] \setminus [t_\alpha] \subseteq \bigcup_{n<\om} \calV_n$.
Finally suppose $\cf(\alpha)=\om_1$.
Then $t_\alpha \in T^*$. Pick $O \in \calU$ with $t_\alpha \in O$.
By the definition of the topology of $X$,
there is some $s<t_\alpha$ such that $s \notin T^*$ and 
$[s] \setminus [t_\alpha]^+ \subseteq O$. Fix $\beta<\alpha$ with $s \le t_\beta$,
and take a countable $\calV \subseteq \calU$ with $[t_0] \setminus [t_\beta] \subseteq \bigcup \calV$.
Then $[t_0] \setminus [t_\alpha] \subseteq ([t_0] \setminus [t_\beta]) \cup ([s]\setminus [t_\alpha]^+)
\subseteq O \cup \bigcup \calV$.

Let $t_\gamma=\bigcup_{\alpha<\gamma} t_\alpha$. We know $t_\gamma \notin T_{\calU}$.
If $t_\gamma \in T$,
by the same argument as before,
we can find a countable $\calV \subseteq \calU$ with
$[t_0] \setminus [t_\gamma] \subseteq \bigcup \calV$.
Since $t_\gamma \notin T_\calU$, there is a countable $\calV' \subseteq \calU$
such that $[t_\gamma] \subseteq \calV'$.
Then $[t_0] \subseteq \bigcup (\calV \cup \calV')$, this is a contradiction.
If $t_\gamma \notin T$, then $t_\gamma \in \calB_0 \cup \calB_1$.
Pick $O \in \calU$ with $t_\gamma \in O$.
Then there is $t \in T \setminus T^*$ with $t<t_\gamma$ and $[t] \subseteq O$.
Fix $\beta<\gamma$ with $t \le t_\beta$.
We have $[t_0] = ([t_0] \setminus [t_\beta]) \cup [t]$,
and we can derive a contradiction as before.

Now we know that $T_\calU$ has no maximal element and is branching.
Hence we can take an embedding $\sigma:2^{<\om} \to T_{\calU}$.
By the assumption on $T$, there is some $f \in 2^\om$ with
$\sigma^*(f) \notin T$.
Then $B= \sigma^*(f)$ is a branch of $T$ and $B \in \calB_0$.
Fix an open set $O \in \calU$ with $B \in O$.
There is some $t \in B$ with $[t] \subseteq O$, and
we can choose $n<\om$ with $t <\sigma(f \restriction n)$.
However then $[\sigma(f \restriction n)] \subseteq O$,
this contradicts to $\sigma(f \restriction n) \in T_{\calU}$.
\qedhere[Claim]
\end{proof}
\begin{remark}
The place where we use the assumption that ``Cantor tree $2^{\le \om}$ cannot be embedded into $T$''
is the proof of this claim, 
and the referee pointed out us that, for proving this claim, the Cantor tree assumption can be weakened to
that ``the tree $2^{<\om_1}$ cannot be embedded into $T$''.
\end{remark}

Next, by modifying open neighborhoods of points in $\calB_0$, 
we construct finer spaces $X_0$ and $X_1$.
Let us say that an embedding $\sigma$ is \emph{good} if
$\dom(\sigma^*(f))=\dom(\sigma^*(g))$ for every $f, g\in 2^\om$.
\begin{claim}\label{2.3}
For every embedding $\sigma$, there is a good embedding $\tau$
such that $\mathrm{Range}(\tau) \subseteq \mathrm{Range}(\sigma)$.
\end{claim}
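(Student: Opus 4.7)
The plan is to reduce the statement to a purely combinatorial question about the height function $h(s) := \dom(\sigma(s))$ on $2^{<\om}$, and then exploit a monotonicity argument.

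First, since $\sigma$ is a tree embedding, $h$ is strictly increasing along every chain of $2^{<\om}$ and takes values in $[0,\om_2)$. Any embedding $\tau$ whose range sits inside $\mathrm{Range}(\sigma)$ factors uniquely as $\tau = \sigma \circ \pi$ for some embedding $\pi : 2^{<\om} \to 2^{<\om}$, and $\tau$ is good precisely when $\sup_{n<\om} h(\pi(f\restriction n))$ is the same ordinal for every $f \in 2^\om$. It therefore suffices to produce such a $\pi$.

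The key observation is that the tail-supremum $b(s) := \sup\{h(t) : t \in 2^{<\om},\ t \ge s\}$ is \emph{non-increasing} as $s$ grows in the tree, and satisfies $b(s) < \om_2$ because the supremum is over a countable set. Hence $b$ attains its minimum at some node $s_0$; write $\beta_0 = b(s_0)$. By minimality combined with monotonicity, $b(t) = \beta_0$ for every $t \ge s_0$. Furthermore, since every node of $2^{<\om}$ has two strict extensions with larger $h$-value, $\beta_0$ is never attained, and is consequently a limit ordinal of cofinality $\om$.

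Fixing a sequence $\gamma_n \nearrow \beta_0$, we then build $\pi$ level by level inside the subtree above $s_0$. Set $\pi(\emptyset) = s_0$, and given $\pi(s)$ for $s \in 2^n$, use the identities $b(\pi(s)^\frown \seq{0}) = b(\pi(s)^\frown \seq{1}) = \beta_0$ to pick $\pi(s^\frown \seq{i}) \ge \pi(s)^\frown \seq{i}$ with $h(\pi(s^\frown \seq{i})) \ge \gamma_{n+1}$ for $i = 0, 1$. The two choices live in disjoint subtrees above $\pi(s)$, so $\pi$ is a genuine embedding; along any branch $f \in 2^\om$ we have $\gamma_n \le h(\pi(f\restriction n)) < \beta_0$, forcing $\sup_n h(\pi(f\restriction n)) = \beta_0$, so that $\tau := \sigma \circ \pi$ is the desired good embedding with constant domain $\beta_0$ on every branch.

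The main conceptual step is noticing the monotonicity of $b$ and using it to pin down a single target ordinal $\beta_0$; without this observation one might hope to take the target height to be the global supremum $\sup_s h(s)$, but that need not be realised along branches lying in a non-splitting part of the tree. Once $s_0$ is identified and we restrict to the subtree above it, the level-by-level construction of $\pi$ is routine, and no property of $T$ beyond $T \subseteq 2^{<\om_2}$ is invoked.
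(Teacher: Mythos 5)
Your proof is correct, but it takes a genuinely different route from the paper's. The paper works on the side of branches: it observes that the set $D=\{\dom(\sigma^*(f))\mid f\in 2^\om\}$ is countable, uses a pigeonhole argument to find a single $\alpha\in D$ whose fiber $E$ is uncountable, and then invokes compactness of the Cantor space (condensation points of $E$) to find, above any node carrying uncountably many $E$-branches, two incomparable extensions of large $\sigma$-height that still carry uncountably many $E$-branches; this yields a refinement $\rho$ with $\alpha_n\le\dom(\sigma(\rho(t)))<\alpha$ for $\dom(t)=n$. You instead work entirely on the side of nodes: the tail-supremum $b(s)=\sup\{\dom(\sigma(t))\mid t\ge s\}$ is non-increasing, so it stabilizes at its minimum $\beta_0$ above some $s_0$; since $\dom\circ\sigma$ is strictly increasing along chains, $\beta_0$ is not attained and has cofinality $\om$, and the level-by-level construction of $\pi$ forcing $\dom(\sigma(\pi(f\restriction n)))\ge\gamma_n$ is then immediate. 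Your argument is more elementary --- it needs no pigeonhole on the $2^\om$ many branches, no uncountability, and no compactness of $2^\om$ --- and all steps check out (in particular the induction maintains $\pi(s)\ge s_0$, so the identity $b(\pi(s)^\frown\seq{i})=\beta_0$ is always available, and the $n=0$ term not satisfying $h(\pi(\emptyset))\ge\gamma_0$ is harmless for the supremum). What the paper's route buys in exchange is the extra information that the target ordinal can be taken to be one to which uncountably many branches of the original $\sigma$ already converge, which is closer in spirit to the counting done in the subsequent claims; for the statement as literally formulated, your argument fully suffices.
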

\begin{proof}
First note that
the set $D=\{\dom(\sigma^*(f)) \mid f \in 2^\om\}$ is at most countable,
because $D$ is a subset of all limit points of the countable set $\{\dom(\sigma(t)) \mid t \in 2^{<\om}\}$.

Now we  have $2^\om=\bigcup_{\alpha \in D} \{f \in 2^\om \mid \dom(\sigma^*(f))=\alpha\}$.
$D$ is countable, thus there is some $\alpha \in D$
such that $E=\{f \in 2^\om \mid \dom(\sigma^*(f))=\alpha\}$ is uncountable.
It is clear that $\alpha$ is a limit ordinal with countable cofinality.
Take an increasing sequence $\seq{\alpha_i \mid i<\om}$ with limit $\alpha$.
Then for every $t \in 2^{<\om}$ and $i<\om$,
if $\{f \in E \mid t \subseteq f\}$ is uncountable,
then there are two $s_0, s_1 \in 2^{<\om}$
such that $t < s_0, s_1$, $\dom(\sigma(s_0)), \dom(\sigma(s_1)) \ge \alpha_i$, and both
$\{f \in E \mid s_0 \subseteq f\}$, $\{f \in E \mid s_1 \subseteq f\}$ are uncountable;
Since the Cantor space $2^\om$ is compact,
we can find two $f_0, f_1 \in 2^\om$ such that
$f_0, f_1 \supseteq t$, and
for every open neighborhood $O$  of $f_0$ or $f_1$ in $2^\om$,
the set $O \cap E$ is uncountable.
Take a large $n<\om$ with $\dom(\sigma(f_0 \restriction n)), \dom(\sigma(f_1 \restriction n)) \ge \alpha_i$
and $f_0 \restriction n \neq f_1 \restriction n$.
Let $s_0 =f_0 \restriction n$ and $s_1=f_1 \restriction n$.
Then we have that the sets 
$\{f \in E \mid s_0 \subseteq f\}$ and $\{f \in E \mid s_1 \subseteq f\}$ are uncountable.

Using the above observation,
we can take an embedding $\rho:2^{<\om} \to 2^{<\om}$
such that for every $t \in 2^{<\om}$ with $\dom(t)=n$,
we have $\alpha_n \le \dom(\sigma(\rho(t))) <\alpha$.
Let $\tau=\sigma \circ \rho$. 
It is easy to check that $\tau:2^{<\om} \to T$ is a required embedding.
\qedhere[Claim]
\end{proof}

\begin{claim}\label{2.4}
Let $\sigma:2^{<\om} \to T$ be a good embedding.
Then the set $\{f \in 2^\om \mid \sigma^*(f) \notin T\}$
is uncountable
\end{claim}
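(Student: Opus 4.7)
The plan is to argue by contradiction. Suppose the set $A=\{f\in 2^\om\mid \sigma^*(f)\notin T\}$ were countable, and enumerate $A=\{a_n\mid n<\om\}$. The strategy is to build an embedding $\rho:2^{<\om}\to 2^{<\om}$ whose induced map $\rho^*:2^\om\to 2^\om$ has range disjoint from $A$. Then the composition $\tau=\sigma\circ\rho:2^{<\om}\to T$ is an embedding satisfying $\tau^*(f)=\sigma^*(\rho^*(f))\in T$ for every $f\in 2^\om$, which directly contradicts assumption (4) of Proposition \ref{prop2} (via the characterization of ``contains no isomorphic copy of the Cantor tree'' recalled just before the proposition).

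To construct $\rho$, I would proceed by a standard Cantor-scheme diagonalization, killing one $a_n$ per level. Put $\rho(\emptyset)=\emptyset$. At stage $n$, having defined $\rho$ on $2^{\le n}$, extend it to $2^{n+1}$ in such a way that every node $\rho(t')$ with $t'\in 2^{n+1}$ is incompatible with $a_n$. Since the $2^n$ nodes $\rho(t)$ ($t\in 2^n$) are pairwise incomparable in $2^{<\om}$, the sequence $a_n$ can extend at most one of them; call it $\rho(t^*)$ if such exists. For every $t\ne t^*$, any two incomparable proper extensions of $\rho(t)$ will do, for instance $\rho(t)^\frown\seq{0}$ and $\rho(t)^\frown\seq{1}$. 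For the distinguished $t^*$, first append the bit $1-a_n(\size{\rho(t^*)})$ to $\rho(t^*)$ to obtain an initial segment $s^*$ which is now incompatible with $a_n$, and then set $\rho({t^*}^\frown\seq{0})=(s^*)^\frown\seq{0}$ and $\rho({t^*}^\frown\seq{1})=(s^*)^\frown\seq{1}$.

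By construction, $\rho$ is an embedding, and for every $f\in 2^\om$ and every $n<\om$, the sequence $\rho^*(f)$ extends $\rho(f\restriction(n+1))$, which was explicitly chosen to be incompatible with $a_n$; hence $\rho^*(f)\ne a_n$ for every $n$, giving $\rho^*(f)\notin A$ as desired. The argument is essentially routine, so no real obstacle arises; the only point worth noting is that at each stage at most one branch of the tree built so far is threatened by $a_n$, which is what lets one kill the elements of $A$ one at a time without disturbing the rest of the structure. Incidentally, the ``good'' hypothesis on $\sigma$ appears not to play any role in this particular claim; it is presumably exploited in the subsequent arguments that use the uncountable set produced here.
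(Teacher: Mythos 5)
Your proof is correct and follows essentially the same route as the paper's: enumerate the putatively countable set, build an embedding of $2^{<\om}$ into itself whose induced map on branches avoids that set, compose with $\sigma$, and contradict assumption (4) of Proposition \ref{prop2}; the paper encodes your level-by-level diagonalization in the single condition $\sigma(\tau(t)) \neq \sigma(f_{\dom(t)} \restriction \dom(\tau(t)))$. Your side remark is also accurate: goodness plays no role in this claim (nor in the paper's proof of it); it is only needed later, in the counting argument that defines $\varphi$, to ensure the branches produced here all have the same domain.
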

\begin{proof}
If it is countable, we can take an enumeration $\seq{f_n \mid n<\om}$ of it.
Then we can take an embedding $\tau :2^{<\om} \to 2^{<\om}$
such that $\sigma(\tau(t)) \neq \sigma(f_{\dom(t)} \restriction \dom(\tau(t)))$.
Let $\rho=\sigma \circ \tau$.
$\rho$ is an embedding, $\mathrm{Range}(\rho) \subseteq \mathrm{Range}(\sigma)$, and
$\mathrm{Range}(\rho^*) \cap \{\sigma^*(f) \mid f \in 2^\om, \sigma^*(f) \notin T\}=\emptyset$.
Because $T$ does not contain an isomorphic copy of Cantor tree,
there is some $f \in 2^\om$ such that $\rho^* (f) \notin T$.
$\mathrm{Range}(\rho) \subseteq \mathrm{Range}(\sigma)$, hence
$\mathrm{Range}(\rho^*) \subseteq \mathrm{Range}(\sigma^*)$ and
there is $n$ with $\rho^*(f)=\sigma^*(f_n)$,
this is a contradiction.
\qedhere[Claim]
\end{proof}

Let $G$ be the set of all good embeddings.
\begin{claim}
There is an injection $\varphi$ from $G$ into $\calB_0$
such that $\varphi(\sigma) \in \mathrm{Range}(\sigma^*)$ for every $\sigma \in G$.
\end{claim}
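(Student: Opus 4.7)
My plan is to build $\varphi$ stratum by stratum, stratifying $G$ according to the common output domain. If $\sigma \in G$ is good, the ordinal $\alpha_\sigma := \dom(\sigma^*(f))$ is independent of $f \in 2^\om$; and since $\alpha_\sigma = \sup_{n<\om} \dom(\sigma(f \restriction n))$, we have $\cf(\alpha_\sigma) = \om$ and $\alpha_\sigma < \om_2$. Set $G_\alpha = \{\sigma \in G \mid \alpha_\sigma = \alpha\}$, so that $G$ is the disjoint union of the $G_\alpha$ as $\alpha$ ranges over the ordinals below $\om_2$ of countable cofinality. Every $\sigma \in G_\alpha$ maps $2^{<\om}$ into $T \cap 2^{<\alpha}$.

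The first substantive step is a cardinality bound on each stratum. Since each level of $T$ has size at most $\om_1$ and $|\alpha| \le \om_1$, we get $|T \cap 2^{<\alpha}| \le \om_1$, so the number of maps $\sigma : 2^{<\om} \to T \cap 2^{<\alpha}$ is at most $\om_1^\om$. Under CH, $\om_1^\om = \om_1$, giving $|G_\alpha| \le \om_1$.

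Next I would define $\varphi$ on each $G_\alpha$ by transfinite recursion. Enumerate $G_\alpha = \{\sigma_\xi : \xi < \lambda_\alpha\}$ with $\lambda_\alpha \le \om_1$. By Claim~\ref{2.4}, the set $S_{\sigma_\xi} := \{\sigma_\xi^*(f) : f \in 2^\om,\ \sigma_\xi^*(f) \notin T\}$ is uncountable, and every element of it is a branch of $T$ whose domain is $\alpha$ (cofinality $\om$), hence lies in $\calB_0$. At stage $\xi < \lambda_\alpha$ the already-defined set $\{\varphi(\sigma_\eta) : \eta < \xi\}$ has size $|\xi| \le \om$, so I can select some branch in $S_{\sigma_\xi}$ outside this countable set and declare it to be $\varphi(\sigma_\xi)$; it automatically lies in $\mathrm{Range}(\sigma_\xi^*) \cap \calB_0$ as required.

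Injectivity then comes for free. Within any single stratum $G_\alpha$ it is built into the construction, and across distinct strata $G_\alpha, G_{\alpha'}$ it holds because each $\varphi(\sigma)$ with $\sigma \in G_\alpha$ is a branch of domain $\alpha$, so $\varphi(\sigma) \ne \varphi(\sigma')$ whenever $\alpha_\sigma \neq \alpha_{\sigma'}$. The main delicate point is the cardinal arithmetic in the second step: the argument uses CH precisely to force $\om_1^\om = \om_1$, without which $|G_\alpha|$ could in principle exceed $\om_1$ and obstruct the countable-avoidance recursion inside the uncountable set $S_{\sigma_\xi}$.
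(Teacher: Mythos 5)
Your proof is correct and follows essentially the same route as the paper: stratify $G$ by the common domain $\alpha_\sigma$, bound each stratum by $(\om_1)^\om=\om_1$ using CH and the level-size hypothesis on $T$, and use Claim~\ref{2.4} to pick pairwise distinct branches from the uncountable sets $\mathrm{Range}(\sigma^*)\cap\calB_0$. Your transfinite recursion and the remark that branches from different strata have different domains merely spell out what the paper leaves as ``we can easily take an injection.''
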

\begin{proof}
For $\sigma \in G$,
let $\alpha_\sigma$ be the ordinal such that
$\dom(\sigma^*(f))=\alpha_\sigma$ for every $f \in 2^\om$.
$\alpha$ is a limit ordinal with countable cofinality.

Fix a limit ordinal $\alpha$ with countable cofinality.
We define $\varphi \restriction \{\sigma \in G \mid \alpha_\sigma=\alpha\}$.
We have that $\mathrm{Range}(\sigma) \subseteq T \cap 2^{<\alpha}$ 
for every $\sigma \in G$ with $\alpha_\sigma=\alpha$.
By the assumption on $T$,
we have that $T \cap 2^{<\alpha}$ has cardinality at most $\om_1$, so
there are at most $(\om_1)^\om=\om_1$ many good embeddings $\sigma$ with $\alpha_\sigma=\alpha$.
In addition,
by Claim \ref{2.4},
for every $\sigma \in G$ with $\alpha_\sigma=\alpha$,
the set $\{f \in 2^\om \mid  \sigma^*(f) \notin T\}$ is uncountable, hence has cardinality $\om_1$.
Combining these observations, we can easily take an injection
$\varphi \restriction \{\sigma \in G \mid \alpha_\sigma=\alpha\}$ into $\calB_0$
with $\varphi(\sigma) \in \mathrm{Range}(\sigma^*)$.
\qedhere[Claim]
\end{proof}

Fix an injection $\varphi:G \to \calB_0$ with
$\varphi(\sigma) \in \mathrm{Range}(\sigma^*)$.
For $B \in \calB_0$, let $\delta_B=\dom(B)$.
We define an increasing sequence $\seq{\delta_n^B \mid n<\om}$ with
limit $\delta_B$ as follows.
If $B \notin \mathrm{Range}(\varphi)$, then 
$\seq{\delta_n^B \mid n<\om}$ is an arbitrary increasing sequence with limit $\delta_B$
and $\cf(\delta_n^B) \neq \om_1$.
If $B \in \mathrm{Range}(\varphi)$, there is a unique $\sigma \in G$
with $\varphi(\sigma)=B$.
Take $f \in 2^\om$ with
$\sigma^*(f) =B$.
Then take an increasing sequence $\seq{\delta_n^B \mid n<\om}$
with limit $\delta_B$ such that 
$\cf(\delta_n^B) \neq \om_1$ and
for each $n<\om$
there is $m<\om$ with $B \restriction  \delta_n^B<s
<
B \restriction  \delta_{n+1}^B$,
where $s$ is a maximal element of $T$ with
$s <\sigma(f \restriction m+1), \sigma(f \restriction m^\frown \seq{1-f(m)})$.

Now we are ready to define $X_0$ and $X_1$.
For $B \in \calB_0$ and $m<\om$,
let $W_0(B,m)= \{B\} \cup \bigcup\{ [B \restriction \delta^B_n] \setminus [B \restriction \delta^B_{n+1}]
\mid n: \text{even}, n>m\}$
and $W_1(B,m)= \{B\} \cup \bigcup \{ [B \restriction \delta^B_n] \setminus [B \restriction \delta^B_{n+1}]
\mid n: \text{odd}, n>m\}$.
The topology of $X_0$ is generated
by the family 
\[
\{[t] \mid t \in T \setminus T^* \}  \cup \{ [s]\setminus [t]^+ \mid 
t \in T^*, s \notin T^*, s<t\}
\cup \{ W_0(B, m) \mid B \in \calB_0, m <\om\}
\]
as an open base.
The topology of $X_1$ is generated
by the family 
\[
\{[t] \mid t \in T \setminus T^* \}  \cup \{ [s]\setminus [t]^+ \mid 
t \in T^*, s \notin T^*, s<t\}
\cup \{ W_1(B, m) \mid B \in \calB_0, m <\om\}
\]
as an open base.
It is not hard to check that $X_0$ and $X_1$ are zero-dimensional $T_1$ spaces finer than $X$.
We have to check that $X_0$ and $X_1$ satisfy the assumptions in Proposition \ref{prop1}.

For $B \in \calB_0$, the family 
$\{ W_0(B, m) \mid m<\om\}$ forms a local base for $B$ in $X_0$,
and
$\{ W_1(B, m) \mid m<\om\}$ forms a local base for $B$ in $X_1$.
Moreover
$W_0(B,0)  \cap W_1(B, 0)=\{B\}$.

For $B \in \calB_1$, take an increasing continuous sequence $\seq{\delta_\alpha \mid \alpha<\om_1}$
with limit $\dom(B)$ and $\cf(\delta_\alpha) \neq \om_1$.
Then $\{[B \restriction \delta_\alpha] \mid \alpha<\om_1\}$ is a continuously decreasing sequence
of clopen sets in $X$ with $\bigcap_{\alpha<\om_1} [B \restriction \delta_\alpha]=\{B\}$.
Similarly, for $t \in T^*$,
take an increasing continuous sequence $\seq{\delta_\alpha \mid \alpha<\om_1}$
with limit $\dom(t)$ and $\cf(\delta_\alpha) \neq \om_1$.
Then
the sequence $\{[t \restriction \delta_\alpha] \setminus [t]^+\mid \alpha<\om_1\}$ is a required one.

Finally we have to check that $X_0$ and $X_1$ are Lindel\"of.
\begin{claim}\label{3.7}
$X_0$ and $X_1$ are Lindel\"of.
\end{claim}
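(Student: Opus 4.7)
The plan is to mimic the proof of Claim~\ref{3.2+} with modifications at $\calB_0$-points. I argue only for $X_0$, since $X_1$ is symmetric (with ``even'' replaced by ``odd''). Fix an open cover $\calU$ of $X_0$ by basic open sets, and set
\[
T_\calU = \{t \in T \mid \text{no countable subfamily of } \calU \text{ covers } [t]\}.
\]
Since $X_0 = [\emptyset]$, it suffices to show $T_\calU = \emptyset$. As in Claim~\ref{3.2+}, $T_\calU$ is a subtree of $T$, so aiming for a contradiction I establish that $T_\calU$ has no maximal element and is branching.

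The no-maximal-element step is identical to Claim~\ref{3.2+} since neighborhoods of $T^*$-points are unchanged in $X_0$. For branching, along a putative non-branching chain $\seq{t_\alpha \mid \alpha<\gamma}$ in $T_\calU$, the inductive construction of countable $\calV_\alpha \subseteq \calU$ with $[t_0]\setminus[t_\alpha]\subseteq\bigcup\calV_\alpha$ carries over verbatim for all successor and limit steps. The only new issue arises for the union $t_\gamma = \bigcup_\alpha t_\alpha \in \calB_0$, when the open $O \in \calU$ containing $t_\gamma$ may be of the form $W_0(t_\gamma,k)$, which contains no $[t']$ with $t' < t_\gamma$. I handle this by choosing an $\om$-sequence $\seq{\tau_n \mid n<\om}$ cofinal in $t_\gamma$ with $\cf(\dom\tau_n)\neq\om_1$, fixing $\beta_n<\gamma$ with $\tau_n\le t_{\beta_n}$, and observing $\bigcap_n[\tau_n]=\{t_\gamma\}$, so that
\[
[t_0]\subseteq O\cup\bigcup_{n<\om}([t_0]\setminus[\tau_n])\subseteq O\cup\bigcup_{n<\om}\bigcup\calV_{\beta_n},
\]
contradicting $t_0\in T_\calU$. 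The subcases $t_\gamma\in T$, $t_\gamma\in\calB_1$, and $O$ in other basic forms reducing to $[t']\subseteq O$ with $t'\in t_\gamma\cap(T\setminus T^*)$ follow from the original argument.

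Now branching plus no-maximum yields an embedding $2^{<\om}\to T_\calU$, refined by Claim~\ref{2.3} to a good embedding $\sigma:2^{<\om}\to T_\calU$. Set $B=\varphi(\sigma)\in\calB_0$, take $f\in 2^\om$ with $\sigma^*(f)=B$, and pick $O\in\calU$ containing $B$. If $O$ reduces to $[t']\subseteq O$ for some $t'\in B\cap(T\setminus T^*)$ (as it does when $O$ is of the form $[t]$, $[s]\setminus[t]^+$, or $W_0(B',k)$ with $B'\neq B$), then $[\sigma(f\restriction j)]\subseteq[t']\subseteq O$ for all sufficiently large $j$, contradicting $\sigma(f\restriction j)\in T_\calU$. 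The decisive remaining case is $O=W_0(B,k)$, where the precise construction of $\seq{\delta_n^B\mid n<\om}$ enters: for each $n$ there exists $m<\om$ with $B\restriction\delta_n^B<s_m<B\restriction\delta_{n+1}^B$, where $s_m$ is the maximal common predecessor of $\sigma(f\restriction(m+1))$ and $u_m:=\sigma((f\restriction m)^\frown\seq{1-f(m)})$. Choosing an even $n>k$ and its associated $m$, the node $u_m$ extends $s_m\supseteq B\restriction\delta_n^B$ but is incompatible with $B\restriction\delta_{n+1}^B$ (which lies along $B$ past $s_m$ on the $\sigma(f\restriction(m+1))$-side, opposite to $u_m$); hence any extension of $u_m$ lies in $[B\restriction\delta_n^B]\setminus[B\restriction\delta_{n+1}^B]$, so
\[
[u_m]\subseteq[B\restriction\delta_n^B]\setminus[B\restriction\delta_{n+1}^B]\subseteq W_0(B,k)=O,
\]
contradicting $u_m\in T_\calU$.

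The chief obstacle is this last step: verifying that the interleaving of $\seq{\delta_n^B}$ with the splitting nodes of $\sigma=\varphi^{-1}(B)$ traps every ``wrong-direction'' extension $u_m$ inside a single even strip of $W_0(B,k)$, so that $O$ alone covers all of $[u_m]$. This is exactly the purpose of partitioning strips by parity in the definitions of $W_0$ and $W_1$; the symmetric argument for $X_1$ uses an odd $n>k$, and the same $\seq{\delta_n^B}$ serves both.
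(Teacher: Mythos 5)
Your proof is correct and follows essentially the same route as the paper's: the same $T_\calU$ analysis, the same treatment of the new $t_\gamma\in\calB_0$ case in the branching step, and the same use of Claim~\ref{2.3}, the injection $\varphi$, and the interleaving of $\seq{\delta_n^B}$ with the splitting nodes to trap $[u_m]$ inside a single even strip of $W_0(B,k)$. (Your use of an \emph{even} $n>k$ is consistent with the stated definition of $W_0$; the paper's ``odd $n^*$'' there appears to be a typo.)
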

\begin{proof}
We only show that $X_0$ is Lindel\"of.
One can check that $X_1$ is also Lindel\"of by the same way.

Let $\calU$ be an open cover of $X_0$.
As before, let $T_\calU$ be the set of all $t \in T$
such that there is no countable $\calV \subseteq \calU$ with
$[t] \subseteq \calV$.
It is enough to see that $T_{\calU}=\emptyset$.
Suppose to the contrary that $T_{\calU} \neq \emptyset$.
We can see that $T_{\calU}$ has no maximal element.
Next we check that $T_{\calU}$ is branching.
If not, then we can take a chain $\seq{t_\alpha\mid \alpha<\gamma}$ in $T_{\calU}$.
By the same argument as before, we know that
for every $\alpha<\gamma$ there is a countable $\calV \subseteq \calU$ with
$[t_0] \setminus [t_\alpha] \subseteq \calV$.
Let $t_\gamma =\bigcup_{\alpha<\gamma} t_\alpha$.
If $t_\gamma \in \calB_1$ or $t_\gamma \in T$,
then one can derive a contradiction as before.
If $t_\gamma \in \calB_0$,
take an increasing sequence $\seq{\alpha_n \mid n<\om}$ with limit $\gamma$.
For $n<\om$, take a countable $\calV_n \subseteq \calU$
with $[t_0]\setminus [t_{\alpha_n}] \subseteq \calV_n$.
Pick an open set $O \in \calU$ with $t_\gamma \in O$.
Then $[t_0] 
=\bigcup_{n<\om} ([t_0] \setminus [t_{\alpha_n}]) \cup \{t_\gamma\}
\subseteq O \cup \bigcup_{n<\om} \calV_n$, this is a contradiction.

Now we have that $T_\calU$ has no maximal element and is branching.
Hence there is an embedding $\sigma:2^{<\om} \to T_{\calU}$.
By Claim \ref{2.3}, there is a good embedding $\tau$ with
$\mathrm{Range}(\tau) \subseteq \mathrm{Range}(\sigma)$.
Consider $B=\varphi(\tau) \in \calB_0$.
Take $f \in 2^\om$ with $\tau^*(f)=B$.
Fix an open set $O \in \calU$ with $B \in O$.
Then there is $m<\om$ such that
$W_0(B,m) \subseteq O$,
so there is an odd  number $n^*$ 
with $[B \restriction \delta^B_{n^*}] \setminus [B \restriction \delta^B_{n^*+1}] \subseteq O$.
By the choice of $\delta^B_{n^*}$,
there is some $l<\om$ with
$B \restriction  \delta_{n^*}^B<s
<
B \restriction  \delta_{n^*+1}^B$,
where $s$ is a maximal element of $T$ with
$s <\tau(f \restriction l+1), \tau(f \restriction l^\frown \seq{1-f(l)})$.
This means that
$[\tau(f \restriction l^\frown \seq{1-f(l)})]
\subseteq 
[B \restriction \delta^B_{n^*}] \setminus [B \restriction \delta^B_{n^*+1}] $,
hence
$[\tau(f \restriction l^\frown \seq{1-f(l)})] \subseteq O$.
This contradicts to $\tau(f \restriction l^\frown \seq{1-f(l)}) \in T_\calU$.
\qedhere[Claim]
\end{proof}
\begin{remark}
As in the proof of Claim \ref{3.2+}, we used the assumption that ``Cantor tree $2^{\le \om}$ cannot be embedded into $T$''
in the proof of this claim.
However, unlike Claim \ref{3.2+},
the author does not know whether it can be weakened to
that ``the tree $2^{<\om_1}$ cannot be embedded into $T$''.
\end{remark}

\end{document}